\newcommand{\Var}{{\rm{Var}_{\mathbb{C}}}}
\def\1{\underline{1}}
\def\P{\mathbb P}
\def\Z{{\mathbb Z}}
\def\C{{\mathbb C}}
\def\B{{\mathcal B}}
\def\A{{\mathcal A}}
\def\CP{\mathbb C\mathbb P}
\def\wtimes{\widehat{\times}}
\newtheorem{theorem}{Theorem}
\newtheorem{statement}{Statement}
\newtheorem{definition}{Definition}
\newenvironment{proof}
{\noindent{\bf Proof\/}.}{{ $\square$}\smallskip\par}
\title{Grothendieck ring of pairs of quasi-projective varieties
\footnote{Math. Subject Class.: 18F30. Keywords: 
complex quasi-projective varieties, Grothendieck rings,
lambda-structure, power structure.}
}
\author{S.M.~Gusein-Zade \thanks{The work of the first author (Sections~\ref{sec:lambda} and~\ref{sec:Example})
was supported by the grant 21-11-00080 of the Russian Science Foundation.
Address: Moscow State University, Faculty
of Mechanics and Mathematics, 
Moscow Center for Fundamental and Applied Mathematics,
GSP-1, Moscow, 119991, Russia \&
National Research University ``Higher School of Economics'',
Usacheva street 6, Moscow, 119048, Russia. E-mail:
sabir\symbol{'100}mccme.ru} \and I.~Luengo \thanks{ The last two authors were partially
supported by a competitive Spanish national grant MTM PID2020-114750GB-C32.
Address:  ICMAT (CSIC-UAM-UC3M-UCM), Dept. of Algebra, Geometry and Topology, Complutense University of Madrid,
Plaza de Ciencias 3, Madrid, 28040, Spain.
E-mail: iluengo\symbol{'100}mat.ucm.es} \and
A.~Melle-Hern\'andez \thanks{Address:  Instituto de Matem\'atica Interdisciplinar (IMI),
Dept. of Algebra, Geometry and Topology, Complutense University of Madrid,
Plaza de Ciencias 3, Madrid, 28040, Spain. E-mail: amelle\symbol{'100}mat.ucm.es}}
\date{}
\begin{document}
\def\eps{\varepsilon}

\maketitle

\begin{abstract}
We define a Grothendieck ring of pairs of complex quasi-projective varieties (that is a variety and a subvariety).
We describe $\lambda$-structures
and a power structure on/over this ring.
We show that the conjectual symmetric power
of the projective line with several orbifold
points described by A.~Fonarev is consistent
with the symmetric power of this line with
points as a pair of varieties.
\end{abstract}

\section{Introduction}\label{sec:Intro}
We consider the Grothendieck ring $K_0^{\rm pairs}(\Var)$ of pairs of complex quasi-projective varieties and define natural
$\lambda$-structures and a power structure on/over it.
One motivation for that came from the intention to define
the Grothendieck ring of (quasi-projective) orbifolds (and to
study $\lambda$-structures and power structures on/over it).
The simplest (non-trivial) orbifold is a complex cuve $C$
with several orbifold points $a_1$, \dots, $a_s$ which can
be considered as the pair $(C, \{a_1,\ldots, a_s\})$ of
varieties (of dimension 1 and 0 respectively). Submanifolds in affine or projective spaces also can be considered as pairs.

Grothendieck rings of varieties (with additional structures)
often are endowed by $\lambda$-structures. Natural 
$\lambda$-structures on them are defined by analogues of
the Kapranov zeta function (the generating series of the
classes of the symmetric products) and of the generating series
of the classes of the configuration spaces. We consider natural
analogues of these series for pairs of varieties.
A $\lambda$-structure on a ring defines a power structure over it
(see~\cite{GLM-MRL}). Power structures over Grothendieck rings of varieties with additional structures turn out to be
useful to write down and/or to prove statements about generating series of classes of some varieties: see, e.g., \cite{GLM-Michigan}, \cite{GLM-Steklov}, \cite{Relative}.
We show that two natural $\lambda$-structures on $K_0^{\rm pairs}(\Var)$ define one and the same
power structure over it and give its geometric description.

\section{The Grothendieck ring of pairs of varieties}
\label{sec:Groth_ring}
A pair of (complex quasi-projective) varieties is a pair
$(X,Y)$ consisting of a complex quasi-projective variety $X$
and a Zariski locally closed
subvariety $Y$ in it. The variety $X$ will be called {\em the ambient variety}, $Y$ will be called {\em the subvariety} of the pair.

The {\em Grothendieck semiring $S_0^{\rm pairs}(\Var)$ of
pairs of complex quasi-projective varieties} is the semigroup
generated by the isomorphism classes $[(X,Y)]$ of pairs modulo
the relation
$$
[(X,Y)]=[(Z,Y\cap Z)]+[(X\setminus Z,Y\cap(X\setminus Z))]
$$
for a Zariski closed subvariety $Z\subset X$; the multiplication in $S_0^{\rm pairs}(\Var)$ is defined by
$$
[(X_1,Y_1)]\wtimes[(X_2,Y_2)]=
[(X_1\times X_2,(X_1\times Y_2)\cup(Y_1\times X_2))]\,.
$$

The {\em Grothendieck ring $K_0^{\rm pairs}(\Var)$} (of
pairs of complex quasi-projective varieties) is the (abelian)
group
generated by the isomorphism classes of pairs with the
same relation and the same multiplication.

The unit $1$ in this ring (and also in the semiring) is represented by the pair $({\rm pt},\emptyset)$ where 
${\rm pt}$ is a one point set (${\rm Spec\,}\C$).

\section{$\lambda$-structures on $K_0^{\rm pairs}(\Var)$}
\label{sec:lambda}
A {\em $\lambda$-structure} on a ring $R$ (often called a
pre-$\lambda$-structure) is defined by a series 
$\lambda_a(t)\in 1+t\cdot R[[t]]$ given for all $a\in R$
such that 
$\lambda_a(t)=1+at+~terms~of~higher~order$, 
$\lambda_{a+b}(t)=\lambda_a(t)\cdot\lambda_b(t)$
(see, e.\,g.,~\cite{Knutson}).

Let $(X,Y)$ be a pair of varieies. The $n$th symmetric product
of $(X,Y)$ is $S^n(X,Y)=(X,Y)^n/S_n$, that is the pair consisting of the symmetric product $S^nX$ of $X$ with the subvariety of
(unordered) $n$-tuples $\{x_1,\ldots, x_n\}$ of points of $X$
such that (at least) one of these points belongs to $Y$.
(Here and below the Cartesian power
$(X,Y)*n$ of  pair means
$(X,Y)\widehat{times}\ldots\widehat{times}(X,Y)$, $n$ times.)
Example: $S^n1=1$, where $1=[({\rm pt},\emptyset)]$.
($(X,Y)^0=({\rm pt},\emptyset)$ and therefore $[S^0(X,Y)]=1$.)

The $n$th (unordered) configuration space of $(X,Y)$ is
$\Lambda^n(X,Y)=\left((X,Y)^n\setminus\Delta\right)/S_n$,
where $\Delta$ is the big diagonal consisting of $n$-tuples of points with at least two coinciding ones. In other words
$\Lambda^n(X,Y)$ is the pair consisting of the usual configuration space $\Lambda^nX=(X\setminus\Delta)/S_n)$ of $X$ with the subspace of
(unordered) $n$-tuples $\{x_1,\ldots, x_n\}$ of different points of $X$
such that (at least) one of them belongs to $Y$.
Example: $\Lambda^n1=0$ for $n\ge 2$.

\begin{definition}
 The Kapranov zeta function of $[(X,Y)]$ is the series
 $$
 \zeta_{X,Y}(t)=1+\sum_{n=1}^{\infty}[S^n(X,Y)]\cdot t^n
 \in 1+t\cdot K_0^{\rm pairs}(\Var)[[t]]\,.
 $$
\end{definition}

Example: $\zeta_1(t)=1+t+t^2+\ldots=(1-t)^{-1}$.

\begin{statement}\label{st:Kapranov}
 The Kapranov zeta function $\zeta_{X,Y}(t)$ defines a
 $\lambda$-structure on $K_0^{\rm pairs}(\Var)[[t]]$.
\end{statement}

\begin{proof}
 We has to show that, for two pairs $(X_1,Y_1)$ and $(X_2,Y_2)$, one has
 \begin{equation}\label{eqn:Kapranov-lambda}
 \zeta_{X_1\sqcup X_2,Y_1\sqcup Y_2}(t)=
 \zeta_{X_1,Y_1}(t)\cdot \zeta_{X_2,Y_2}(t)\,.
 \end{equation}
 The coefficient of $t^n$ in the LHS of~(\ref{eqn:Kapranov-lambda})
 is the pair consisting of the space of unordered $n$-tuples
 $\{x_1,\ldots,x_n \}$ of points of $X_1 \sqcup X_2$
 (that is $S^n(X_1 \sqcup X_2)$) with the subvariety of
 tuples $K=\{x_1,\ldots,x_n \}$ such that at least one of
 the pointx $x_i$ belongs to $Y_1\sqcup Y_2$.
 This means that this space of pairs is the union over $s$, $0\le s\le n$,
 of the spaces of pairs consisting of the products
 $S^s X_1\times S^{n-s} X_2$
 with the subvarieties of them consisting of an (unordered) $s$-tuple $K_1$
 of points of $X_1$ and an (unordered) $(n-s)$-tuple $K_2$ of points of $X_2$ such that at least one point of one of them
 belongs to the corresponding $Y_i$, $i=1 \text{ or }2$. 
 The coefficient of $t^n$ in the RHS of~(\ref{eqn:Kapranov-lambda}) is $\left[\bigsqcup_{s=0}^n
 S^s(X_1,Y_1)\widehat{\times} S^{n-s}(X_2,Y_2)\right]$
 and has the same description as the LHS. 
\end{proof}

Let
 $$
 \lambda_{X,Y}(t)=1+\sum_{n=1}^{\infty}[\Lambda^n(X,Y)]\cdot t^n
 \in 1+t\cdot K_0^{\rm pairs}(\Var)[[t]]
 $$
 be the generating series of the classes of the configuration spaces $\Lambda^n(X,Y)$. Example: $\lambda_1(t)=1+t$.
 
 \begin{statement}\label{st:lambda-series}
 The series $\lambda_{X,Y}(t)$ defines a
 $\lambda$-structure on $K_0^{\rm pairs}(\Var)[[t]]$.
\end{statement}

The {\bf proof} is literally the same as the one of
Statement~\ref{st:Kapranov} with the only difference that
the tuples under consideration should consist of different points.

\section{Example: the symmetric product of the projective line with several distinguished points}\label{sec:Example}
Let us consider the pair $(\CP^1, \{a_1, \ldots, a_s\})$ ($a_i\ne a_j$ for $i\ne j$) consisting of the projective line
and an $s$-point subset of it. We shall describe its $n$th symmetric  product.

The $n$the symmetric product of the projective line $\CP^1$ with the 
coordinates $(u:v)$ is the $n$-dimensional projective space
$\CP^n$. An isomorphism between $S^n(\CP^1)$ and $\CP^n$ can be
established by the following map. Let $(p_0: p_1:\ldots: p_n)$
be coordinates on $\CP^n$. A point of $S^n(\CP^1)$ is
represented by an $n$-tuple of points $\{x_1,\ldots, x_n)$ of
$\CP^1$, $x_i=(u_i:v_j)$. The point in $\CP^n$ corresponding to
this $n$-tuple is
\begin{eqnarray*}
 p_0&=&+u_1u_2\ldots u_n,\\
 p_1&=&-\sum_{1\le i\le n} u_1u_2\ldots \widehat{u_i}v_i\ldots u_n,\\
 p_2&=&+\sum_{1\le i<j\le n} u_1u_2\ldots \widehat{u_i}v_i\ldots \widehat{u_j}v_j\ldots u_n,\\
 &{\ }&\ldots\ldots\ldots\\
 p_n&=&\pm\ v_1v_2\ldots v_n,
\end{eqnarray*}
where (as usual) $\widehat{\cdot}$ means that the corresponding term is
excluded. (This means that $z_i=u_i/v_i$ are the roots of the
polynomial $P(z)=p_0+p_1z+p_2z^2+\ldots+p_nz^n$.)
Let $a_i=(x_i:1)$ The subset $Y$ in the pair 
$S_n(\CP^1, \{a_1, \ldots, a_s\})=(\CP^n,Y)$ consists of the points $(p_0: p_1:\ldots: p_n)\in\CP^n$ such that one of the
roots of the polynomial $p_0+p_1z+p_2z^2+\ldots+p_nz^n$
is one of $x_i$. This means that $Y$ is the union over $i=1, \ldots, s$ of the subsets of the points $(p_0: p_1:\ldots: p_n)\in \CP^n$ such that $p_0+p_1x_i+p_2x_i^2+\ldots+p_nx_i^n=0$.
These are $s$ hyperplanes in $\CP^n$ in general position.
This gives a partial explanation of the (conjectual) description of the
orbifold $S^n(\CP^1, \{a_1, \ldots, a_s\})$ with $s$ points of order
$2$ in~\cite{Fonarev}.

\section{The geometric power structure over the ring
$K_0^{\rm pairs}(\Var)[[t]]$}\label{sec:Power}
A power structure over a ring $R$ is a method to give sense to
an expression of the form $(A(t))^m$, where
$A(t)=1+a_1t+a_2t^2+\ldots\in 1+t\cdot R[[t]]$ (that is
$a_i\in R$) and $m\in R$, as a series from $1+t\cdot R[[t]]$ (in other words it is a map from
$(1+t\cdot R[[t]])\times R$ to $1+t\cdot R[[t]]$:
$(A(t),m)\mapsto (A(t))^m$ so that all the usual properties of
the exponential function hold the main of them (those listed in~\cite{GLM-MRL}; later some more requirements (obvious in the case under consideration) were added:~\cite{GLM-Steklov}) being
\begin{enumerate}
\item[1)] $\left(A(t)\right)^0=1$;
\item[2)] $\left(A(t)\right)^1=A(t)$;
\item[3)] $\left(A^{(1)}(t)\cdot A^{(2)}(t)\right)^m=\left(A^{(1)}(t)\right)^m\cdot \left(A^{(2)}(t)\right)^m$;
\item[4)] $\left(A(t)\right)^{m_1+m_2}=\left(A(t)\right)^{m_1}\cdot \left(A(t)\right)^{m_2}$;
\item[5)] $\left(A(t)\right)^{m_1m_2}=\left(\left(A(t)\right)^{m_2}\right)^{m_1}$.
\end{enumerate}
A $\lambda$-structure on a ring $R$ defines a power structure over it. On the other hand, in general, there are many 
$\lambda$-structures corresponding to one and the same power structure.

A power structure over the ring $K_0^{\rm pairs}(\Var)$
is called effective if, in fact, it is defined over the
semiring $S_0^{\rm pairs}(\Var)$. This means that,
if the coefficients $a_i$ of the series $A(t)$ and the exponent $m$ are the classes of pairs of varieties (not differences of such classes), then all the coefficients of the series
$(A(t))^m$ are also represented by classes of pairs.

\begin{theorem}
The $\lambda$-structures on the ring $K_0^{\rm pairs}(\Var)[[t]]$ described in Section~\ref{sec:lambda} define one and the same 
power structure over it, which is effective.
\end{theorem}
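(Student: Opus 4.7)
The plan is to construct an explicit ``geometric'' power structure over $S_0^{\rm pairs}(\Var)$ and then verify that both $\lambda$-structures of Section~\ref{sec:lambda} arise from it. For $A(t) = 1 + \sum_{i\ge 1}[(A_i, B_i)]\,t^i$ and $m = [(M,N)]$ I would set
$$
(A(t))^{[(M,N)]} = 1 + \sum_{n\ge 1}\bigl[(X_n, Y_n)\bigr]\,t^n,
$$
where
$$
X_n = \bigsqcup_{\{k_i\}:\,\sum_i i k_i = n} \Big( (M^{\sum_i k_i}\setminus\Delta)\times\prod_i A_i^{k_i} \Big) \Big/ \prod_i S_{k_i},
$$
the group $\prod_i S_{k_i}$ acting by simultaneous permutations (within each $i$, on the $k_i$ copies of $M$ and the $k_i$ copies of $A_i$), and $Y_n\subset X_n$ is the locus of those configurations in which either one of the chosen $M$-points lies in $N$ or one of the $A_i$-labels lies in $B_i$. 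This is the direct pair-analogue of the formula of~\cite{GLM-MRL}.

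First I would verify axioms 1)--5) of a power structure. Axiom 1) is immediate. For axiom 2), when $M={\rm pt}$ and $N=\emptyset$ only the terms with $\sum_i k_i\le 1$ contribute, isolating the $t^i$ coefficient as $[(A_i,B_i)]$. For axioms 3) and 4) one splits each configuration according to which of the two coefficient series each label comes from (for 3)) or which of the two summand varieties each $M$-point sits in (for 4)), and observes that the interaction with the distinguished subvariety $Y_n$ is governed precisely by the multiplication rule
$[(X_1,Y_1)]\wtimes[(X_2,Y_2)] = [(X_1\times X_2,\,(X_1\times Y_2)\cup(Y_1\times X_2))]$
of Section~\ref{sec:Groth_ring}, which encodes ``at least one distinguished condition holds''.

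Next I would substitute the two distinguished series to recognise the $\lambda$-structures of Section~\ref{sec:lambda}. For $A(t) = 1+t$ only the term with $k_1 = n$ survives, giving $X_n = (M^n\setminus\Delta)/S_n = \Lambda^n M$ with $Y_n$ the configurations meeting $N$, and so $(1+t)^{[(M,N)]} = \lambda_{M,N}(t)$. For $A(t) = (1-t)^{-1}$ all $A_i = {\rm pt}$ and $B_i = \emptyset$, and the disjoint union over $\{k_i\}$ reproduces the multiplicity-type stratification of $S^n M$, so $X_n = S^n M$ and $Y_n$ is the subvariety of multisets meeting $N$; this gives $((1-t)^{-1})^{[(M,N)]} = \zeta_{M,N}(t)$. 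By the standard correspondence (a $\lambda$-structure determines a unique compatible power structure via the construction recalled in~\cite{GLM-MRL}), the geometric power structure is the one associated to each of the two $\lambda$-structures, so they indeed define one and the same power structure. Effectivity is manifest from the construction: $X_n$ and $Y_n$ are built as quotients of locally closed subvarieties of products of $M$, $N$, $A_i$, $B_i$, with no subtractions required.

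The main obstacle is axiom 5), $(A^{m_2})^{m_1} = A^{m_1 m_2}$: in the classical case this is proved by a bijection between iterated labelled configurations (an $M_1$-configuration of $M_2$-configurations with labels) and single labelled configurations on $M_1\times M_2$, and here one must additionally verify that the resulting distinguished locus on both sides -- a union of the loci where some point lies in $(N_1\times M_2)\cup(M_1\times N_2)$ or some label hits its corresponding $B_i$ -- match under the bijection. This is combinatorially involved but conceptually routine, reducing stratum-by-stratum to the classical case once the pair structures on $(M_j, N_j)$ and $(A_i, B_i)$ are tracked correctly.
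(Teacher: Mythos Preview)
Your approach is essentially identical to the paper's: the same explicit geometric formula for the coefficients of $(A(t))^{[(M,N)]}$, verification of axioms 1)--5) by reducing the ambient-space identities to the classical case of~\cite{GLM-MRL} and tracking the distinguished subvarieties separately, and the identifications $(1+t)^{[(M,N)]}=\lambda_{M,N}(t)$ and $(1-t)^{-[(M,N)]}=\zeta_{M,N}(t)$ to tie both $\lambda$-structures to the one power structure. The paper writes out the subvariety check for axiom~5) in somewhat more detail than you do, but the argument you sketch is the same one.
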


\begin{proof}
To prove that, we shall give a geometric
description of this power structure. This means that we shall describe the
coefficient at $t^k$ in the series
$(A(t))^{[(M,N)]}$, where $A(t)=1+\sum_{k=1}^{\infty}
[(A_k,B_k)]\cdot t^k$, as the class of a pair of
quasi-projective varieties.
This will mean that the power structure is effective (after it
is proved that the description really gives a power structure).
The fact that this power structure corresponds both to the
Kapranov zeta function $\zeta_{X,Y}(t)$ an to the generating series
$\lambda_{X,Y}(t)$ of the classes of the configuration spaces
will follow from the equations 
\begin{eqnarray}
 \zeta_{X,Y}(t)&=&(1-t)^{-[(X,Y)]}\quad\text{and}\label{eqn:powerKapranov}\\
 \lambda_{X,Y}(t)&=&(1+t)^{[(X,Y)]}\label{eqn:powerLambda}
\end{eqnarray}
(in the sense of the power structure).

The description is as follows. Let
$\A^*=\bigsqcup_{i=1}^{\infty}A_i$,
$\A:=\A^*\cup\{{\rm pt}\}$,
$\B=\B^*=\bigsqcup_{i=1}^{\infty}B_i$. 
Let $i$ be the tautological (integer valued) function on $\A$ and on $\B$ which sends, for $i>0$, $A_i$ and $B_i$ to $i\in\Z$ and the point
${\rm pt}\in \A$
to $0$.

The coefficient at $t^{n}$ in the series ${A}(t)^{[(M,N)]}$ is represented by the following pair of varieties.
The ambient space is
the configuration space of pairs $(K,\varphi)$, where $K$ is a finite subset of the variety $M$ and $\varphi$ is a map from $K$ to $\A^*$
such that $\sum\limits_{x\in K}i(\varphi(x))=n$.
The subvariety of the pair is the set of points of the ambient space
such that
either (at least) one of the points of $K$ lies in $N$
or the image of (at least) one point of $K$ is in $B^*$.

To describe such a configuration space as a quasi-projective variety one can write it as
\begin{equation}\label{eqn:geom}
\sum_{{\mathbf k}:\,\sum {i}\,k_i=n}
\left[
\left(\left(
(\prod_{i} (M,N)^{k_i})
\setminus\Delta
\right)
\widehat{\times}\prod_i (A_i,B_i)^{k_i}\right)\left/\prod_i S_{k_i}\right.
\right]\,,
\end{equation}
where ${\mathbf k}=\{k_1, k_2, \ldots\}$, $k_i\in\Z_{\ge 0}$ and $\Delta$ is the ``large diagonal" in $(M,N)^{\Sigma k_i}$ which consists of $(\sum k_i)$-tuples of points of $M$ with at least two coinciding ones; the permutation group $S_{k_i}$ acts by permuting corresponding $k_i$ factors in
$\prod\limits_i (M,N)^{k_i}\supset (\prod\limits_{i} (M,N)^{k_i})\setminus\Delta$ and the pairs $(A_i,B_i)$ in $(A_i,B_i)^{k_i}$ simultaneously (the connection between this formula and the description above is clear).

To prove that the described construction really
gives a power structure over $K_0^{\rm pairs}(\Var)$, one has to verify the conditions 1) -- 5). The first two are obvious.
The ambient spaces in the summands in Equation~(\ref{eqn:geom}) coincide with
the corresponding summands for the
expression $\left(1+\sum_i[A_i]t^i\right)^{[M]}$
in~\cite[Equation~(1)]{GLM-MRL} (i.e.~for the (usual) power structure over the Grothendieck ring $K_0(\Var)$ of complex quasi-projective varieties).
Therefore the corresponding equalities
for them can be found in~\cite{GLM-MRL}.
Thus it is necessary to verify the conditions 3) -- 5) only
for the subvarieties of the pairs.

Condition 3) ($A^{(j)}(t)=1+\sum\limits_{i=1}^{\infty}[(A^{(j)}_i,B^{(j)}_i)]t^i$, $j=1,2$, $m=[(M,N)]$).
The map (an isomorphism) from the right hand
side to the left hand side on the ambient
spaces is the following. 
If $A(t)=A^{(1)}(t)\widehat{times} A^{(2)}(t)
=1+\sum_{i=1}^{\infty}(A_i,B_i)$, then
$\A=\A^{(1)}\times \A^{(2)}$.
If $K_1$ and $K_2$
are finite subsets of $M$ with maps
$\psi_1$ and $\psi_2$ to $A^{(1)}$ and
$A^{(1)}$ respectively (these data define
a point from the union of the coefficients
of the right hand side), then the corresponding (finite) subset $K$ is the
union $K_1\cup K_2$ with the map $\psi$ to
$A^*$ defined by
$\psi(x)=(\psi_1(x),\psi_2(x))$, where,
if $x\notin K_1$ (respectively if 
$x\notin K_2$), one assumes $\psi_1(x)={\rm pt}$ ($\psi_2(x)={\rm pt}$ respectively).
If a point of $K$ lies in $N$, but either
a point of $K_1$ or a point of $K_2$ lies there. If all the points of $K$ are not
in $N$, then one of its points maps to
$\B=(\A^{(1)}\times \B^{(2)})\cup
(\B^{(1)}\times \A^{(2)})$, then either
its image under $\psi_1$ lies in $\B^{(1)}$
or its image under $\psi_2$ lies in $\B^{(2)}$. This means that the corresponding
point of the union of the coefficients
of $(A(t))^{[(M,N)]}$ lies in the image of the subspace
in the product of the unions of coefficients
of $(A^{(1)}(t))^{[(M,N)]}$ and of $(A^{(2)}(t))^{[(M,N)]}$ and vice versa.

Condition 4) ($m_1=[(M_1,N_1)]$,
$m_2=[(M_2,N_2)]$).
The map (an isomorphism) from the right hand
side to the left hand side on the ambient
spaces is the following. If $K_1$
(respectively $K_2$) is a finite subset
of $M_1$ (respectively of $M_2$) with
maps $\psi_1:K_1\to\A^*$ and
$\psi_2:K_1=2\to\A^*$ (these data define
a point from the union of the coefficients
of the right hand side),
then $K\subset M_1\cup M_2$ is the union
of (disjoint) $K_1$ and $K_2$ with the
natural map to $\A^*$.
If a point of $K$ lies in $N_1\cup N_2$,
then either a point of $K_1$ lies in $N_1$
or a point of $K_2$ lies in $N_2$.
If all the points of $K$ are outside of
$N_1\cup N_2$, but the image of one of them
lies in $\B$, then either of a point of $K_1$ or a point of $K_2$ maps to $\B$.
This means that the corresponding
point of the union of the coefficients
of $(A(t))^{[(M_1\cup M_2,N_1\cup N_2)]}$ lies in the image of the subspace
in the product of the unions of coefficients
of $(A(t))^{[(M_1,N_1)]}$ and of $(A(t))^{[(M_2,N_2)]}$ and vice versa.

Condition 5) ($m_1=[(M_1,N_1)]$,
$m_2=[(M_2,N_2)]$).
The map (an isomorphism) from the left hand
side to the right hand side on the ambient
spaces (in this case it is more convenient to describe the map in this direction) is the following. 
If $K$ is a finite subset of $M_1\times M_2$
with a map $\psi$ to 
$\A^*$ (these data define
a point from the union of the coefficients
of the left hand side), 
then the corresponding (finite) subset $K_1$
of $M_1$ is the projection $\pi_1(K)$
to the first factor.
For each point $y$ from $K_1$ the corresponding point of the union of the coefficients of $\left(A(t)\right)^{[M_2]}$
(i.e.\ the point to which $y$ is mapped)
is represented by the subset
$\pi_2\pi_1^{-1}(y)$ of $M_2$ with the map
to $\A^*$ being the corresponding restriction. If a point $x\in K$ lies 
in $M_1\times N_2\cup M_2\times N_1$,
then either its projection to $M_1$ lies
in $N_1$ or its projection to $M_2$ lies
in $N_2$. If this is not the case, but
$\psi(x)$ lies in $\B$, then the corresponding point $\pi_1(x)\in M_2$
is mapped to the subspace in the union of
the coefficients of $(A(t))^{[(M_2,N_2)]}$.
This means that the corresponding
point of the union of the coefficients
of $\left((A(t))^{[(M_2,N_2)]})\right)^{[(M_1,N_1)]}$ lies in the image of the subspace of the union of coefficients
of $(A(t))^{[(M_1,N_1)]\widehat{\times}[(M_1,N_1)]}$ and vice versa.
\end{proof}

\end{document}